\documentclass[12pt,oneside]{amsart} 

\usepackage[dvips]{graphics}

\usepackage{epsfig}
\usepackage{stmaryrd}

\setlength{\textheight}{8in} 
\setlength{\oddsidemargin}{0.32in}
\setlength{\textwidth}{5.805in}
\setlength{\topmargin}{0.37in}

\usepackage{amsfonts}
\usepackage{amsmath, amsthm, amssymb, ulem, amscd} 

\setcounter{section}{0}

\newcommand{\stopthm}{\hfill$\square$\medskip}

\pagestyle{headings}
\newcommand{\cF}{{\mathcal F}} 
\newcommand{\cP}{{\mathcal P}} 
\newcommand{\pa}{\partial}

\newcommand{\ep}{\epsilon}

\newcommand{\om}{\omega}

\newcommand{\Xb}{\overline{X}}

\newcommand{\cC}{\mathcal{C}}

\newcommand{\gb}{{\overline g}} 
\newcommand{\gh}{{\widehat g}}

\newcommand{\Vol}{\operatorname{Vol}}
\newcommand{\Ric}{\operatorname{Ric}}

\newcommand{\Xint}{\buildrel \circ\over X}

\theoremstyle{plain}
\newtheorem{theorem}{Theorem}[section]

\newtheorem{proposition}[theorem]{Proposition}

\theoremstyle{definition}

\theoremstyle{remark}

\numberwithin{equation}{section}

\title{A note on renormalized volume functionals}
\author{Sun-Yung Alice Chang}
\address{Department of Mathematics, Princeton University, Princeton, NJ
  08544}
\email{chang@math.princeton.edu}
\thanks{The research of the first-named author is partially
  supported by 
  NSF grant DMS-1104536.  The research of the second-named author is
  partially supported by NSF grant DMS-1008249.  The research of the 
  third-named author is partially supported by NSF grant DMS-0906035.}  
\author{Hao Fang}
\address{Department of Mathematics, University of Iowa, Maclean Hall, Iowa
  City, Iowa 52242-1419 }
\email{hao-fang@uiowa.edu}
\author{C. Robin Graham}
\address{Department of Mathematics, University of Washington,
Box 354350\\
Seattle, WA 98195-4350}
\email{robin@math.washington.edu}

\begin{document}

\maketitle

\thispagestyle{empty}

\begin{center}
{\it Dedicated to Michael Eastwood on his 60th birthday}
\end{center}

\bigskip

\section{Introduction}\label{intro}

The asymptotic expansion of the volume of an asymptotically hyperbolic
Einstein (AHE) metric defines invariants of the AHE metric and of a
metric in the induced conformal class at infinity.  These have been of
recent interest, motivated in part by the AdS/CFT correspondence in
physics.  In this paper we derive some new properties of these invariants.    

 
Let $(X^{n+1},g_+)$ be AHE with smooth conformal infinity $(M,[g])$, $M=\pa
X$.  We always assume that $X$ is connected although $\pa X$ need not be.
If $r$ is a geodesic defining function associated to a metric $g$ in 
the conformal class at infinity (see \S\ref{renormvol} for more details),
we have the following volume expansion~(\cite{G1}):  \vskip .1in  

\noindent For $n$ even,
$$ \aligned \text{Vol}_{g_{+}}(\{r > \epsilon\})  = c_0 \epsilon^{-n} &+
c_2\epsilon^{-n+2} + \cdots \\ & + 
c_{n-2}\epsilon^{-2} + L \log \frac 1\epsilon + V_{g_{+}} + o(1)\\
\endaligned $$

\noindent For $n$ odd, 
$$\aligned \text{Vol}_{g_{+}}(\{r >
\epsilon\})  = c_0 \epsilon^{-n} &+ c_2\epsilon^{-n+2} + \cdots   \\
& + c_{n-1} \epsilon^{-1} + V_{g_{+}} + o(1). \\
\endaligned $$
If $n$ is even, $L$ is independent of the choice of $g$; if $n$ is odd, 
$V_{g_{+}} $ is independent of $g$.  These are invariants of the 
conformal infinity $(M,[g])$ and the AHE manifold $(X,g_+)$, resp.  The
constants $c_{2k}$ and the renormalized volume $V_{g_+}$ for $n$ even
depend on the choice of representative metric $g$ in the conformal
infinity.  

The coefficients $c_{2k}$ and $L$ can be written as integrals over $M$ of
local 
expressions in the curvature of $g$, the so-called renormalized volume
coefficients $v^{(2k)}(g)$.  (The notation $v^{(2k)}$ is the same 
as in \cite{G1, CF}.  In \S\ref{secondvar} we also use the  
notation $v_k=(-2)^kv^{(2k)}$ of \cite{G2}.)  Changing  
perspective slightly, one realizes a given conformal  
manifold $(M^n,[g])$ as the conformal boundary of an AHE manifold $(X,g_+)$
in an asymptotic sense (see \cite{FG2}), and the $v^{(2k)}(g)$ are the
coefficients in the asymptotic expansion of the volume
form of $g_+$.  They have recently been studied in 
\cite{CF,G2}.  They are well-defined for general metrics for all $k \geq 0$
when $n$ is odd but only for $0\leq k \leq n/2$ when $n$ is even.  However,
for $n\geq 4$ even they are also defined for all $k\geq 0$ if $g$ is
locally conformally flat or conformally Einstein.    
Directly from the definition of $v^{(2k)}$ (see \S\ref{renormvol}),  we have 
$$
 c_{2k} = \frac{1}{n-2k}\int_M v^{(2k)}(g) dv_g, \qquad
0 \leq k \leq \lfloor (n-1)/2\rfloor, 
$$
$$
L = \int_M v^{(n)}(g) dv_g,\qquad n\text{ even}.
$$

When $n$ is even, $V_{g_+}=V_{g_+}(g)$ is a global quantity depending on
the choice of $g$.  Nonetheless, its change under conformal rescaling of 
$g$ 
can be expressed by an integral of a local expression over the boundary.
If $\gh=e^{2\om}g$ is a conformally related metric, then 
$$
V_{g_+}(\gh)-V_{g_+}(g)=\int_M\cP_g(\om)dv_g,
$$
where $\cP_g$ is a polynomial nonlinear differential operator whose
coefficients depend polynomially on $g$, $g^{-1}$ and the curvature of $g$
and its covariant derivatives, and whose linear part in $\omega$ and its
derivatives is $v^{(n)}(g)\om$ (see \cite{G1}).  In particular, 
$$
\pa_t V_{g_+}(e^{2t\om}g)|_{t=0} = \int_Mv^{(n)}(g)\om \,dv_g,
$$
i.e. $V_{g_+}$ is a conformal primitive of $v^{(n)}$.  

Our first result is a formula for $V_{g_+}$ for $n$ odd in terms of a 
compactification of the AHE metric $g_+$.  If $s$ is any defining function
for $\pa X$, the metric $\gb=s^2g_+$ is called a compactification of
$g_+$.  For any such compactification, $\partial X$ is an umbilic
hypersurface relative to $\gb$, i.e. its second fundamental form is a
smooth multiple of the induced metric.  We will say that $\gb$ is a totally
geodesic compactification if  
the second fundamental form of $M=\pa X$ relative to $\gb$ vanishes
identically.  If $\gb$  is any compactification, then $e^{2\om}\gb$ is
a totally geodesic compactification for many choices of $\om\in 
C^\infty(\Xb)$;  
the totally geodesic condition on $e^{2\om}\gb$ is equivalent to the
condition that the normal derivative of $\om$ at $\pa X$ be a specific 
function determined by $\gb$.  

\begin{theorem}\label{geodcomp}
If $n\geq 3$ is odd, $g_+$ is AHE, and $\gb$ is a totally geodesic 
compactification of $g_+$, then 
\begin{equation}\label{renormvolformula}
V_{g_+}=C_{n+1}\int_Xv^{(n+1)}(\gb)\,dv_{\gb},\qquad    
C_{n+1}=\frac{2^{n-1}(n+1)(\frac{n-1}{2})!\,^2}{n!}.   
\end{equation}
\end{theorem}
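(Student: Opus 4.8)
The plan is to turn \eqref{renormvolformula} into an integral identity by first computing the constant $C_{n+1}$ intrinsically. Since $g_+$ is Einstein with $\Ric(g_+)=-ng_+$ in dimension $n+1$, its Schouten tensor is $P_{g_+}=-\tfrac12 g_+$, so the Poincar\'e (Fefferman--Graham) family of $g_+$, regarded as an $(n+1)$-dimensional metric, closes up to the explicit form $(1+\tfrac14\rho^2)^2g_+$. Hence
$$
\sum_{k\ge0}v^{(2k)}(g_+)\,\rho^{2k}=\Big(1+\tfrac14\rho^2\Big)^{n+1},
$$
and the critical coefficient (note $n+1$ is even, so this is the top admissible one) is the universal constant $v^{(n+1)}(g_+)=\binom{n+1}{(n+1)/2}2^{-(n+1)}$. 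A short factorial computation shows $v^{(n+1)}(g_+)=C_{n+1}^{-1}$. Since this coefficient is constant, the finite part as $\ep\to0$ of $v^{(n+1)}(g_+)\,\Vol_{g_+}(\{r>\ep\})$ is $v^{(n+1)}(g_+)\,V_{g_+}$, so \eqref{renormvolformula} is equivalent to
$$
\int_X v^{(n+1)}(\gb)\,dv_{\gb}=v^{(n+1)}(g_+)\,V_{g_+}.
$$

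Next I would exploit that $n+1$ is even, so $v^{(n+1)}$ is a renormalized volume coefficient in its critical dimension and $\int v^{(n+1)}\,dv$ is a conformal invariant on closed $(n+1)$-manifolds --- the same even-dimensional phenomenon that makes $L=\int_M v^{(n)}\,dv_g$ independent of $g$. By the locality and naturality of $v^{(n+1)}$, this global invariance yields a pointwise transgression: writing $\gb=e^{2\om}g_+$ with $\om=\log s$, there is a natural $n$-form $\alpha=\alpha(g_+,\om)$ with
$$
v^{(n+1)}(\gb)\,dv_{\gb}-v^{(n+1)}(g_+)\,dv_{g_+}=d\alpha.
$$
Integrating over $\{r>\ep\}$ and applying Stokes' theorem gives
$$
\int_{\{r>\ep\}}v^{(n+1)}(\gb)\,dv_{\gb}=\int_{\{r>\ep\}}v^{(n+1)}(g_+)\,dv_{g_+}+\int_{\{r=\ep\}}\alpha.
$$

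Now let $\ep\to0$. The left side converges to $\int_X v^{(n+1)}(\gb)\,dv_{\gb}$ because $\gb$ is smooth up to $\pa X$; the first term on the right is $v^{(n+1)}(g_+)\,\Vol_{g_+}(\{r>\ep\})$, whose expansion contributes the finite part $v^{(n+1)}(g_+)\,V_{g_+}$ together with the divergent powers of $\ep$. Since the left side has a finite limit, the divergent powers must be cancelled by the corresponding terms of the boundary integral $\int_{\{r=\ep\}}\alpha$, and the theorem reduces to showing that the $O(1)$ part of $\int_{\{r=\ep\}}\alpha$ vanishes as $\ep\to0$. This is exactly where the totally geodesic hypothesis enters: analyzing $\alpha$ near $\pa X$ through the normal form $g_+=r^{-2}(dr^2+g_r)$ for the representative $g=\gb|_{T\pa X}$, its leading boundary contribution is governed by the first subleading jet of $s$ at $\pa X$, equivalently by the second fundamental form of $\pa X$ relative to $\gb$; the totally geodesic condition forces this contribution, and hence the finite part of the boundary integral, to vanish. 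Combining the three steps gives $\int_X v^{(n+1)}(\gb)\,dv_{\gb}=v^{(n+1)}(g_+)\,V_{g_+}=C_{n+1}^{-1}V_{g_+}$, which is \eqref{renormvolformula}.

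The main obstacle is this last step: producing an explicit enough transgression $\alpha$ (equivalently, understanding the conformal variation of the critical coefficient $v^{(n+1)}$ near the boundary) and pinning down the precise dependence of its boundary asymptotics on the second fundamental form, so that total geodesy is seen to kill the finite part. A cleaner alternative, avoiding the construction of $\alpha$, is to run the Stokes argument infinitesimally along the path $g_\tau=e^{2\tau\om}g_+$ and integrate in $\tau$, using that the conformal variation of $\int v^{(n+1)}\,dv$ is a pure divergence; the delicate bookkeeping of the resulting boundary terms, and their vanishing under the totally geodesic condition, is then the crux.
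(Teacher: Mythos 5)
Your identification of the constant is correct and worth keeping: for the Einstein metric $g_+$ one has $g_+(\rho)=(1+r^2/4)^2g_+$, hence $v^{(n+1)}(g_+)=2^{-(n+1)}\binom{n+1}{(n+1)/2}=C_{n+1}^{-1}$, so the theorem is indeed equivalent to $\int_X v^{(n+1)}(\gb)\,dv_{\gb}=v^{(n+1)}(g_+)\,V_{g_+}$. The transgression-plus-Stokes framework is also legitimate in principle: since $2\cdot\frac{n+1}{2}=\dim X$, the conformal variation of $v^{(n+1)}\,dv$ is the pure divergence $(-2)^{-(n+1)/2}\nabla_i\bigl(L^{ij}_{((n+1)/2)}\nabla_j\om\bigr)\,dv$ (Theorem 1.5 of \cite{G2}), which does produce a local transgression form along the path $s^{2\tau}g_+$ --- you do not need Alexakis-type decomposition results for this part.

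However, the crux of your argument --- that the $O(1)$ part of $\int_{\{r=\ep\}}\alpha$ vanishes because total geodesy controls ``the first subleading jet of $s$'' --- is a genuine gap, and the heuristic you offer conflates two different orders. The finite part of the boundary integral is not its leading contribution; it sits $n$ orders below the leading divergence $\ep^{-n}$, and for a transgression built from $\om=\log s$ (which blows up at $\pa X$) it generically involves the full $n$-jet of $s$ and of $g_r$ at the boundary. The totally geodesic condition only normalizes the $2$-jet of $s$ (roughly $s=r+O(r^3)$), so it cannot by itself kill the finite part; that vanishing statement is essentially the entire content of the theorem. The paper circumvents exactly this difficulty in two stages: first it proves the identity for the special compactification $\gb_U=e^{2U}g_+$ with $-\Delta_{g_+}U=n$, for which $Q(\gb_U)=0$ and the holographic formula of \cite{GJ} convert $v^{(n+1)}(\gb_U)$ into an explicit divergence whose boundary integral is computable --- the top-order piece produces $-\frac{n!}{2}V_{g_+}$ via $\pa_r\Delta^{(n-3)/2}R=-2nn!B$ and \eqref{V}, and all lower-order pieces vanish by the curvature asymptotics of Proposition~\ref{0630}; second, it passes from $\gb_U$ to an arbitrary totally geodesic compactification with the same boundary metric by a \emph{nonsingular} conformal factor $\om=O(r^2)$, for which the divergence integrates to zero trivially since $\nabla\om|_{\pa X}=0$. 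To complete your approach you would need either to reproduce the equivalent of these boundary computations for your singular $\alpha$, or to split off the singular part of the conformal change as the paper does.
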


In the special case $n=3$, Theorem~\ref{geodcomp} follows from a result
of 
Anderson \cite{An} (see also \cite{CQY} for a different proof) expressing
the Gauss-Bonnet formula in terms of $V_{g_+}$.  Anderson showed that
\begin{equation}\label{gb1}
8\pi^2\chi(X) = \frac14\int_X |W|_{g_+}^2 dv_{g_+} +6 V_{g_+},
\end{equation}
where $W$ is the Weyl tensor and $|W|^2=W^{ijkl}W_{ijkl}$.  
On the other hand, for a totally geodesic compactification, the boundary  
term vanishes in the Gauss-Bonnet formula for the compact
manifold-with-boundary 
$(X,\gb)$.  It was observed in \cite{CGY} that in dimension $4$ the
Pfaffian is a multiple of $\frac14 |W|^2+4\sigma_2(g^{-1}P)$, where $P$  
denotes the 
Schouten tensor and $\sigma_k(g^{-1}P)$ the $k$-th elementary symmetric   
function of the eigenvalues of the endomorphism $g^{-1}P$.  Since 
$v^{(4)}(g) =\frac14 \sigma_2(g^{-1}P)$, the Gauss-Bonnet formula for 
$(X,\gb)$ can be written
\begin{equation}\label{gb2}
8\pi^2\chi(X)=\int_X \left[\tfrac14 |W|_{\gb}^2 
+16 v^{(4)}(\gb)\right]dv_{\gb}. 
\end{equation}
Comparing \eqref{gb1} and \eqref{gb2} and recalling that $\int |W|^2$ is
conformally invariant gives \eqref{renormvolformula}.     

For $n>3$ odd, a generalization of Anderson's formula expressing the
renormalized volume as a linear combination of
the Euler characteristic and the integral of a pointwise
conformal invariant has been established in~\cite{CQY}.  We do not use this
identity, but instead use the idea of  
its proof to directly relate the renormalized volume to the integral of
$v^{(n+1)}(\gb)$ for a particular totally geodesic compactification.  
The fact that \eqref{renormvolformula} then holds for any totally geodesic  
compactification follows using the result of \cite{G2} that under  
conformal change, the $v^{(2k)}$ depend on at most two derivatives of the
conformal factor.  

Our second result concerns the renormalized
volume functionals $\cF_k(g)$ defined by   
$$
\cF_k(g)=(-2)^{k}\int_Mv^{(2k)}(g)\,dv_g
$$
on the space of metrics on a connected compact manifold $M$.   
This normalization is chosen so that 
$\cF_k(g)=\int_M\sigma_k(g^{-1}P)\,dv_g$ if $g$ is locally conformally
flat; the conformal properties of the functionals
$\int_M\sigma_k(g^{-1}P)\,dv_g$ have been intensively studied during the
last decade.  In \cite{CF} it was shown
that if $2k\neq n$, then the Euler-Lagrange equation for $\cF_k$   
under conformal change subject to the constraint that the volume is  
constant is $v^{(2k)}(g)=c$.  In \cite{G2}, it was shown that if a
background metric $g_0$ in the conformal class is fixed and one writes
$g=e^{2\om}g_0$, then the Euler-Lagrange equation $v^{(2k)}(e^{2\om}g_0)=c$
is second order in $\om$ even though for $k\geq 2$, $v^{(2k)}(g)$ depends
on $2k-2$ derivatives of $g$.  

Any Einstein metric satisfies $v^{(2k)}(g)=c$, so is a critical point of
$\cF_k$.  In this paper, we  identify the second variation  
at a general critical point of $\cF_k$ under conformal change subject to 
the constant volume constraint and use this to show that Einstein metrics
of nonzero scalar curvature are 
local extrema.  Let $\cC$ denote a conformal class of metrics on $M$   
and let $\cC_1$ denote the subset of metrics of unit volume.  

\begin{theorem}\label{signs}
Let $(M,g)$ be a unit volume connected compact Einstein manifold of
dimension $n\geq 3$ with nonzero scalar  
curvature which is not isometric to $S^n$ with the standard metric
(normalized to have unit volume).  Suppose $1\leq k\leq n$ and if $n$ is
even assume that $k\neq n/2$.  
Then the second variation of $\cF_k|_{\cC_1}$,
$\left(\cF_k|_{\cC_1}\right)''$, is a definite quadratic form on 
$T_g\cC_1$ whose sign is as follows:
\begin{enumerate}
\item
Let $k<n/2$.
\begin{itemize}
\item  
If $R>0$, then $\left(\cF_k|_{\cC_1}\right)''$ is positive definite.
\item
If $R<0$, then $\left(\cF_k|_{\cC_1}\right)''$ is positive definite for $k$
odd and negative definite for $k$ even. 
\end{itemize}
\item
Let $k>n/2$.  Then all signs are reversed:
\begin{itemize}
\item  
If $R>0$, then $\left(\cF_k|_{\cC_1}\right)''$ is negative definite.
\item
If $R<0$, then  $\left(\cF_k|_{\cC_1}\right)''$ is negative definite for
$k$ odd and positive definite for $k$ even. 
\end{itemize}
\end{enumerate}
For $S^n$ with the (normalized) standard metric, the only change is that  
$\left(\cF_k|_{\cC_1}\right)''$ is semi-definite with the indicated sign
and with $n+1$-dimensional nullspace.  
\end{theorem}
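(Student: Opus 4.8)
The plan is to restrict $\cF_k$ to the conformal class $\cC$ of the given Einstein metric $g_0$, write a competitor as $g=e^{2\omega}g_0$, and study the resulting functional of $\omega$ under the unit-volume constraint $\int_M e^{n\omega}\,dv_{g_0}=1$. Since $g_0$ is Einstein its Schouten tensor is $P=\tfrac{\kappa}{2}g_0$ with $\kappa=\tfrac{R}{n(n-1)}$, so $v^{(2k)}(g_0)$ is constant and $g_0$ is a critical point of $\cF_k|_{\cC_1}$ by the Euler--Lagrange equation $v^{(2k)}=c$ of \cite{CF}. The engine of the computation is the exact first-variation identity (also from \cite{CF})
\[
\tfrac{d}{dt}\big|_{t=0}\cF_k(e^{2t\phi}g)=(n-2k)\int_M \phi\,v_k(g)\,dv_g ,\qquad v_k:=(-2)^kv^{(2k)},
\]
which holds at every metric $g$, together with the description $T_{g_0}\cC_1=\{\,2\phi g_0:\int_M\phi\,dv_{g_0}=0\,\}$ of the tangent space to the constraint surface.

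Next I would compute the constrained Hessian. Along a curve $g(t)=e^{2\omega(t)}g_0$ with $\omega(t)=t\phi+\tfrac{t^2}{2}\psi$, the unit-volume constraint forces $\int_M\phi\,dv_{g_0}=0$ at first order and $\int_M\psi\,dv_{g_0}=-n\int_M\phi^2\,dv_{g_0}$ at second order. Applying the first-variation identity along this curve and differentiating once more at $t=0$, the terms carrying the constant value $v_k(g_0)$ cancel precisely against the second-order constraint term in $\psi$, leaving
\[
\big(\cF_k|_{\cC_1}\big)''(\phi,\phi)=(n-2k)\int_M\phi\,D_k\phi\,dv_{g_0},\qquad D_k\phi:=\tfrac{d}{dt}\big|_{t=0}v_k(e^{2t\phi}g_0).
\]
The point of this reduction is that the full second variation of $v_k$ never appears: only its conformal \emph{linearization} $D_k$ at $g_0$ is needed.

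The heart of the matter is thus to evaluate $D_k$ at $g_0$. Evaluating the explicit Poincar\'e--Einstein filling of an Einstein metric gives the generating function $\sum_k v^{(2k)}(g_0)\,r^{2k}=(1-\tfrac{\kappa}{4}r^2)^n$, hence $v_k(g_0)=\binom{n}{k}(\tfrac{\kappa}{2})^k=\sigma_k(g_0^{-1}P)$; in particular the correction $F_k:=v_k-\sigma_k(g^{-1}P)$ vanishes at $g_0$. I would therefore split $D_k$ into the linearization of $\sigma_k(g^{-1}P)$ plus that of $F_k$. The first is classical: $\dot P=-\nabla^2\phi$ under conformal change and the Newton tensor of $g_0^{-1}P$ is $\binom{n-1}{k-1}(\tfrac{\kappa}{2})^{k-1}$ times the identity, giving a contribution $-\binom{n-1}{k-1}(\tfrac{\kappa}{2})^{k-1}(\Delta\phi+n\kappa\,\phi)$. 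The genuine obstacle is the linearization of $F_k$. Homogeneity of $v^{(2k)}$ (the scaling $v^{(2k)}(\lambda^2g_0)=\lambda^{-2k}v^{(2k)}(g_0)$) already forces the zeroth-order, $\phi$-proportional part of $DF_k$ to vanish; what remains is a possible top-order term built from $\nabla^2\phi$ and the Weyl tensor of $g_0$, and it is here that the result of \cite{G2}, namely that $v^{(2k)}(e^{2\omega}g_0)$ depends on at most two derivatives of $\omega$ with a controlled structure, is essential to conclude that this term also drops out. Granting this, one obtains
\[
D_k\phi=-\binom{n-1}{k-1}\Big(\tfrac{\kappa}{2}\Big)^{k-1}\big(\Delta\phi+n\kappa\,\phi\big).
\]

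Substituting this into the reduced formula and integrating by parts gives, for mean-zero $\phi$,
\[
\big(\cF_k|_{\cC_1}\big)''(\phi,\phi)=(n-2k)\binom{n-1}{k-1}\Big(\tfrac{\kappa}{2}\Big)^{k-1}\int_M\big(|\nabla\phi|^2-n\kappa\,\phi^2\big)\,dv_{g_0}.
\]
For $R<0$ the integrand quadratic form is manifestly positive, so the integral is strictly positive; for $R>0$ it is governed by the first nonzero eigenvalue $\lambda_1$ of $-\Delta$, and the Lichnerowicz--Obata theorem gives $\lambda_1\ge n\kappa$ with equality exactly for the round sphere, so the integral is strictly positive off $S^n$ and, on $S^n$, has the first eigenspace (of dimension $n+1$) as its nullspace. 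In all cases the sign of the Hessian is then the sign of the prefactor $(n-2k)\binom{n-1}{k-1}(\tfrac{\kappa}{2})^{k-1}$, whose two sign factors $\operatorname{sign}(n-2k)$ and $\operatorname{sign}(\kappa)^{k-1}=\operatorname{sign}(R)^{k-1}$ reproduce exactly the eight cases in the statement, while the sphere yields the asserted semidefinite form with $(n+1)$-dimensional nullspace. The decisive and most delicate step is the vanishing of the top-order part of the linearization of $F_k$ at the Einstein background; the borderline sphere case is then disposed of cleanly by the equality statement in Obata's theorem.
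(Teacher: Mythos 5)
Your overall architecture coincides with the paper's: reduce the constrained Hessian at a critical point to $(n-2k)\int_M\phi\,\delta v_k(g,\phi)\,dv_g$ via the Lagrange multiplier cancellation, evaluate the conformal linearization of $v_k$ at the Einstein metric to get a multiple of $\Delta\phi+\frac{R}{n-1}\phi$, and finish with positivity of the integrand for $R<0$ and the Lichnerowicz--Obata eigenvalue bound (with its equality case) for $R>0$. Your final formula and the sign bookkeeping are correct and agree with the paper's.

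However, there is a genuine gap at what you yourself call the decisive step: the evaluation of $D_k\phi=\delta v_k(g_0,\phi)$ at the Einstein background. You split $v_k=\sigma_k(g^{-1}P)+F_k$, dispose of the zeroth-order part of $DF_k$ by homogeneity, and then assert that the remaining second-order part ``drops out'' by appeal to the fact from \cite{G2} that $v^{(2k)}(e^{2\omega}g_0)$ depends on at most two derivatives of $\omega$. That fact alone does not suffice: second-order dependence tells you the correction is of the form $S^{ij}\nabla_i\nabla_j\phi$ (plus lower order) for some natural tensor $S^{ij}$, but gives no reason for $S^{ij}$ to vanish at an Einstein metric, and you never identify it. The paper closes exactly this point by invoking the explicit variational formula of Imbimbo--Schwimmer--Theisen--Yankielowicz and \cite{G2}, namely $\delta v_k(g,\omega)=\nabla_i\bigl(L^{ij}_{(k)}(g)\nabla_j\omega\bigr)-2kv_k(g)\omega$ with
$$
L^{ij}_{(k)}(g)=-\frac{1}{k!}\,\pa_\rho^k\Bigl(v(\rho)\int_0^\rho g^{ij}(u)\,du\Bigr)\Big|_{\rho=0},
$$
and then using that for Einstein $g$ the Poincar\'e expansion is explicitly $g(\rho)=(1+a\rho)^2g$, $v(\rho)=(1+a\rho)^n$, whence $v(\rho)\int_0^\rho g^{ij}(u)\,du=\rho(1+a\rho)^{n-1}g^{ij}$ and $L^{ij}_{(k)}(g)=-a^{k-1}\binom{n-1}{k-1}g^{ij}$ by a one-line computation. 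To repair your argument you must either quote this formula (or the structure theorem of \cite{G2} expressing $L_{(k)}$ as a polynomial in the Schouten and extended obstruction tensors together with the vanishing of the latter at Einstein metrics) rather than only the two-derivative statement; as written, the coefficient of $\nabla_i\nabla_j\phi$ in $DF_k$ is unaccounted for, and possible first-order terms $T^i\nabla_i\phi$ are not addressed at all.
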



Of course, one concludes from Theorem~\ref{signs} that $\cF_k|_{\cC_1}$
has a local maximum or minimum
at an Einstein metric, with sign determined as in the statement of the 
theorem.  The max-min conclusion follows also for $S^n$
since the null directions for the Hessian arise from conformal
diffeomorphisms. 

If $g$ is locally conformally flat or if $k=1$ or $2$, then
$(-2)^kv^{(2k)}(g)=\sigma_k(g^{-1}P)$.  In these cases Theorem~\ref{signs} 
follows from Theorem 2 of \cite{V}, which is concerned with the second  
variation of the functionals $\int_M \sigma_k(g^{-1}P)\, dv_g$.  
(Theorem~\ref{signs} corrects a sign error in the statement of Theorem 2 of
\cite{V} for $k>n/2$.)  Theorem~\ref{signs} indicates that for $k>2$
and $g$ not locally conformally flat, $(-2)^kv^{(2k)}(g)$ is the natural
replacement for $\sigma_k(g^{-1}P)$.
The special case $k=3$, $n>6$, of Theorem~\ref{signs} was first proved by    
Guo-Li~\cite{GL} by direct computation of $\left(\cF_3|_{\cC_1}\right)''$
from the explicit formula for $v^{(6)}(g)$.

As mentioned above, Theorem~\ref{signs} follows from a   
formula which we derive for the second variation of
$\cF_k|_{\cC_1}$ at a general critical point (Theorem~\ref{hess}).     
This second variation formula is an immediate consequence of a formula
derived in \cite{ISTY} and rederived in \cite{G2} for the first 
conformal variation of $v^{(2k)}(g)$:  by the result of \cite{CF}, 
the first conformal variation of $\cF_k$ is integration against a multiple
of $v^{(2k)}(g)$, 
so the second variation of $\cF_k$ is integration against the first
conformal variation of $v^{(2k)}(g)$.  The principal part of these
variations is a symmetric contravariant 2-tensor $L^{ij}_{(k)}(g)$ defined
by \eqref{Ldef} which was derived in \cite{ISTY} and analyzed in some
detail in \cite{G2}.  We also state a general condition in terms of 
$L^{ij}_{(k)}(g)$ which is sufficient for definiteness of the second
variation of $\cF_k|_{\cC_1}$ for non-Einstein critical points and which  
generalizes a criterion of Viaclovsky 
in the cases $k=1$, $2$ or $g$ locally conformally flat when $g$ has
(possibly nonconstant) negative scalar curvature.  

If $n$ is even, $\cF_{n/2}(g)$ is conformally 
invariant as noted above, so conformal variations of $\cF_{n/2}(g)$ 
are trivial.  A natural substitute for $\cF_{n/2}(g)$ as far 
as conformal variations is concerned is  
the renormalized volume $V_{g_+}(g)$ of an AHE metric $g_+$ with conformal
infinity $(M,[g])$.  $V_{g_+}$ is a conformal primitive  
of $v^{(n)}$ as noted above, just as $(-2)^{-k}\frac{1}{n-2k}\cF_k$ is a
conformal primitive of 
$v^{(2k)}$ if $2k\neq n$.  So critical points of $V_{g_+}|_{\cC_1}$
are precisely solutions of $v^{(n)}(g)=c$.  The identification of the 
first variation of $v^{(2k)}(g)$ from \cite{ISTY, G2} holds just as well
for $2k=n$, so this 
gives immediately the second  variation of $V_{g_+}|_{\cC_1}$ in terms of
the tensor 
$L^{ij}_{(n/2)}(g)$ (Theorem~\ref{V2}).  Einstein metrics are critical
points for $V_{g_+}|_{\cC_1}$, and upon evaluating the second variation at   
an Einstein metric, we deduce the following analogue of
Theorem~\ref{signs}.  Different boundary components contribute
independently to 
the change in $V_{g_+}$, so we take $M$ to be connected and formulate the   
result for a general AHE manifold 
$(X,g_+)$ such that $\cC=(M,[g])$ is one of the connected components
of its conformal infinity.  We fix arbitrarily a representative of the
conformal infinity on each of the other connected components and view
$V_{g_+}$ as a function of the metric in the conformal class on $M$.

\begin{theorem}\label{VE}
Let $n\geq 2$ be even.  Let $(M,g)$ be a unit volume connected compact
Riemannian manifold  
with constant nonzero scalar curvature which is not isometric to $S^n$ with
the standard metric (normalized to have unit volume).  If
$n\geq 4$, assume that $g$ is Einstein.  Let $(X,g_+)$ be AHE and suppose
that $(M,[g])$ is one of the connected components of its conformal
infinity.  The second 
variation of $V_{g_+}|_{\cC_1}$ is a definite quadratic form on  
$T_g\cC_1$ whose sign is as follows:
\begin{itemize}
\item
If $R<0$, then $\left(V_{g_+}|_{\cC_1}\right)''$ is negative definite.
\item
If $R>0$, then $\left(V_{g_+}|_{\cC_1}\right)''$ is positive definite if 
$n\equiv 0 \mod 4$ and negative definite if $n\equiv 2 \mod 4$.  
\end{itemize}
For $S^n$ with the (normalized) standard metric, 
$\left(V_{g_+}|_{\cC_1}\right)''$ is semi-definite with the indicated sign  
and with $n+1$-dimensional nullspace.  
\end{theorem}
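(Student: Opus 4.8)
The plan is to deduce Theorem~\ref{VE} from Theorem~\ref{V2} by exactly the evaluation-at-Einstein argument that passes from Theorem~\ref{hess} to Theorem~\ref{signs}, now run with $k=n/2$. Since an Einstein metric (and, when $n=2$, any metric of constant scalar curvature) satisfies $v^{(n)}(g)=c$, it is a critical point of $V_{g_+}|_{\cC_1}$, so Theorem~\ref{V2} applies and presents $\left(V_{g_+}|_{\cC_1}\right)''$ as a quadratic form in the conformal factor $\om$ built from the tensor $L^{ij}_{(n/2)}(g)$. The unit-volume constraint identifies $T_g\cC_1$ with the mean-zero functions $\{\om:\int_M\om\,dv_g=0\}$, so in the spectral analysis below only positive eigenvalues of the Laplacian occur.

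First I would evaluate the form at the Einstein metric. There the Schouten tensor is $P=\mu g$ with $\mu=\tfrac{R}{2n(n-1)}$, and by naturality together with the Einstein condition $L^{ij}_{(n/2)}(g)$ collapses to a constant multiple of $g^{ij}$, exactly as in the proof of Theorem~\ref{signs}; combining its contribution with the zeroth-order and Lagrange-multiplier terms produced by the constant-volume constraint reduces the second variation to
\begin{equation*}
\left(V_{g_+}|_{\cC_1}\right)''(\om,\om)=\int_M\om\,(a\De+b)\,\om\,dv_g .
\end{equation*}
The computation of $\delta_\om v^{(n)}$ underlying this is the $k=n/2$ specialization of the one in the proof of Theorem~\ref{signs}, now assembled with the normalization appropriate to $V_{g_+}$, which is a conformal primitive of $v^{(n)}$ itself rather than of a rescaled multiple. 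Scaling homogeneity ($v^{(n)}$ has weight $-n$ and $\De$ weight $-2$) forces $a=A_n\mu^{n/2-1}$ and $b=B_n\mu^{n/2}$ for dimensionless constants $A_n,B_n$, and the crucial fact to extract is the sign $\operatorname{sign}(A_n)=(-1)^{n/2}$, which is what encodes the $n\bmod 4$ behavior.

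Next I would diagonalize by expanding $\om$ in eigenfunctions of $\De$: on the $\la$-eigenspace the form equals $(a\la+b)\|\om_\la\|_{L^2}^2$, so definiteness is governed by the sign of $a\la+b$ for $\la\geq\la_1>0$. The threshold is $-b/a=-(B_n/A_n)\mu$, and the computation yields $-B_n/A_n=2n$, so the threshold equals $2n\mu=R/(n-1)$, precisely the Lichnerowicz--Obata bound. When $R<0$ we have $\mu<0$, hence threshold $<0$, and $a\la+b$ keeps the sign of $a$ for every $\la>0$, giving unconditional definiteness. When $R>0$, Obata's theorem gives $\la_1\geq R/(n-1)$, strict unless $(M,g)$ is the round sphere, so again $a\la+b$ has the sign of $a$ on all relevant eigenspaces away from $S^n$, while on $S^n$ it vanishes exactly on the $(n+1)$-dimensional first eigenspace, yielding the asserted nullspace. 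In every case the definite sign is $\operatorname{sign}(a)=(-1)^{n/2}(\operatorname{sign}R)^{n/2-1}$, which (using that $n$ is even) is negative for $R<0$ and equals $(-1)^{n/2}$ for $R>0$, i.e. positive for $n\equiv0$ and negative for $n\equiv2\pmod 4$; this is exactly the table in the statement. The case $n=2$ runs identically, the constant-scalar-curvature hypothesis playing the role of the Einstein condition and the round $S^2$ furnishing the Obata borderline with $3$-dimensional nullspace.

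The main obstacle is therefore the sign-and-constant bookkeeping, not any conceptual step: one must evaluate $L^{ij}_{(n/2)}(g)$ and the accompanying zeroth-order and constraint terms at the Einstein metric precisely enough to determine $A_n$ and $B_n$, confirm the identity $-B_n/A_n=2n$ so that the threshold coincides exactly with the Obata value $R/(n-1)$ and the sphere is the true borderline, and verify $\operatorname{sign}(A_n)=(-1)^{n/2}$, which distinguishes $V_{g_+}$ from the conformally invariant $\cF_{n/2}$ and governs the $n\bmod 4$ dichotomy. Once these constants are pinned down, the definiteness statements and the identification of the sphere's nullspace follow immediately from the eigenfunction expansion and Obata rigidity.
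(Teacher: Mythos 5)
Your proposal follows essentially the same route as the paper: substitute the Einstein (or, for $n=2$, constant scalar curvature) evaluations of $L^{ij}_{(n/2)}$ and $v_{n/2}$ --- which the paper records as \eqref{Leinstein} and \eqref{vkEinstein} --- into Theorem~\ref{V2}, obtaining a multiple of $\int_M\bigl(|\nabla\om|_g^2-R\om^2/(n-1)\bigr)dv_g$ with leading sign $(-1)^{n/2}(\operatorname{sign}a)^{n/2-1}$, and then invoke Obata's eigenvalue bound with its rigidity on $S^n$ for the $R>0$ case. Your asserted values of the constants (threshold $R/(n-1)$ and overall sign) agree with the paper's explicit coefficient $-(-a)^{n/2-1}2^{-n/2}\binom{n-1}{n/2-1}$, so the argument is correct and not materially different.
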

We also state a sufficient condition for 
definiteness of the second variation of $V_{g_+}|_{\cC_1}$ for non-Einstein 
critical points which is analogous to the condition mentioned above for the 
$\cF_k$.   

Colin Guillarmou has informed us that he has proved Theorem~\ref{VE}
in joint work with S. Moroianu and J.-M. Schlenker.  

The results of~\cite{CF,G2} and Theorem~\ref{signs}
indicate the importance of the renormalized volume functionals in   
conformal geometry, which we will hopefully continue to explore in future
works.

\section{Renormalized Volume}\label{renormvol}

Let $g_+$ be an asymptotically hyperbolic Einstein (AHE) metric on $X^{n+1}$ 
with smooth conformal infinity $(M,[g])$, where $M=\pa X$.   
Let $g$ be a metric in the conformal class on $M$.  One can uniquely
identify  a neighborhood of $\pa X$ with $[0,\ep)\times\pa X$ so that 
$g_+$ takes the normal form 
\begin{equation}\label{normalform}
g_+=r^{-2}\left(dr^2+g_r\right) 
\end{equation}
for a 1-parameter family $g_r$ of metrics on $M$ with $g_0=g$. 
The defining function $r$ is called the geodesic defining function
associated to $g$.
A boundary regularity result (\cite{CDLS, H, BH}) shows that $g_r$ is
smooth up to $r=0$ if $n$ is odd, and has a 
polyhomogeneous expansion as $r\rightarrow 0$ if $n$ is even.  The family
$g_r$ is 
even to order $n$; in particular $\pa_r g_r|_{r=0}=0$.  Thus the geodesic 
compactification $\gb_{geod}=r^2g_+$ is totally geodesic.  Any other  
compactification which induces the same boundary metric can be written 
as $\gb=e^{2\om}\gb_{geod}$ for some $\om \in C^{\infty}({\Xb})$ 
satisfying $\om=0$ on $\pa X$.  Such a compactification $\gb$ is totally   
geodesic if and only if $\om = O(r^2)$.  

The renormalized volume coefficients $v^{(2k)}(g)$ are defined for 
$0\leq k\leq \lfloor n/2\rfloor$ by the asymptotic expansion  
\begin{equation}\label{notation1}
\left(\frac{\det g_r}{\det g}\right)^{1/2}
= \sum_{k=0}^{\lfloor n/2\rfloor} v^{(2k)}(g)r^{2k} + o(r^{n}).  
\end{equation}
If $n$ is odd, the definition can be extended to all $k\geq 0$ by
considering metrics $g_+$ of the form \eqref{normalform} for which $g_r$ is
even to infinite order and for which $\Ric(g_+)+ng_+$ vanishes to infinite
order.  The $v^{(2k)}(g)$ are local curvature invariants of $g$ 
which are determined by an inductive algorithm; see \cite{G1, G2}.  Clearly
$v^{(0)}(g)=1$.  The next three are given by: 
\[
\begin{split}
v^{(2)}(g)  &= -\frac{R}{4(n-1)}\\
v^{(4)}(g) 
&= \frac 14 \sigma_{2}(g^{-1}P)=\frac18\left[(P^j{}_j)^2-P^{ij}P_{ij}\right]\\
v^{(6)}(g) & =  -  \frac18 \left[\sigma_{3}(g^{-1}P) + 
   \frac{1}{3(n-4)}P^{ij}B_{ij}\right],
\end{split}
\]  
where $P_{ij}:=\frac{1}{n-2}\left[R_{ij}-Rg_{ij}/2(n-1)\right]$ and   
$B_{ij} : =\nabla^k\nabla_kP_{ij}-\nabla^k\nabla_jP_{ik}-P^{kl}W_{kijl}$    
are the Schouten and Bach tensors of $g$, and $\sigma_k(g^{-1}P)$ is the
$k$-th elementary symmetric function of the eigenvalues of the endomorphism 
$g^{-1}P$.  


The rest of this section is devoted to the proof of
Theorem~\ref{geodcomp}.  The first step is to establish the 
result for a specific totally geodesic 
compactification.  Let $g$ be a metric in the conformal class at infinity
with associated geodesic defining function $r$ and geodesic
compactification $r^2g_+=dr^2+g_r$.  
Theorem 4.1 of \cite{FG1} asserts the existence of a unique 
$U\in C^{\infty}(\Xint)$ such that $-\Delta_{g_+}U=n$ (our convention is
$\Delta=\nabla^k\nabla_k)$ with the asymptotics 
\begin{equation}\label{U}
U=\log r +A +Br^n,
\end{equation}
where $A$, $B\in C^{\infty}(\Xb)$ are even functions  modulo
$O(r^{\infty})$ and $A|_{\partial X}=0$.  Then $e^U=re^{A+Br^n}$ is a
defining function and  
\begin{equation}\label{gUdef}
\gb_U:= e^{2U}g_+=e^{2(A+Br^n)}\left(dr^2+g_r\right)
\end{equation}
is a totally geodesic compactification.  Theorem 4.3 of \cite{FG1} asserts
that 
\begin{equation}\label{V}
V_{g_+}=\int_{\pa X}B|_{\pa X}\,dv_g.  
\end{equation}
\begin{proposition}\label{Ucomp}
Theorem~\ref{geodcomp} holds for $\gb=\gb_U$.
\end{proposition}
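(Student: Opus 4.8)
The plan is to combine the identity $V_{g_+}=\int_{\pa X}B|_{\pa X}\,dv_g$ from \eqref{V} with the conformal invariance of the integral of the top-order renormalized volume coefficient, exploiting that $\gb_U=e^{2U}g_+$ is conformal to the Einstein metric $g_+$ and that $U$ solves $-\De_{g_+}U=n$. Write $m=n+1$, which is even. First I would record that, since $g_+$ is Einstein with $\Ric(g_+)=-(m-1)g_+$, one has $R_{g_+}=-m(m-1)$ and $P_{g_+}=-\tfrac12 g_+$, and the $(m+1)$-dimensional Poincar\'e--Einstein metric over $(X,[g_+])$ is the explicit warped product $\rho^{-2}\big(d\rho^2+(1+\tfrac14\rho^2)^2g_+\big)$. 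Reading off \eqref{notation1} then gives that the renormalized volume coefficients of the Einstein representative are the \emph{constants} $v^{(2k)}(g_+)=\binom{m}{k}4^{-k}$; in particular the critical coefficient $v^{(n+1)}(g_+)=\binom{m}{m/2}4^{-m/2}$ is constant, and a direct computation confirms $\binom{m}{m/2}4^{-m/2}=C_{n+1}^{-1}$.

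The heart of the argument is a transgression. The integral of the top-order coefficient $v^{(m)}$ is conformally invariant on a closed $m$-manifold (the analogue, in dimension $m=n+1$, of the conformal invariance of $\cF_{n/2}$ noted in the introduction). Consequently, on the compact manifold-with-boundary $\Om_\ep=\{r>\ep\}$,
$$\int_{\Om_\ep}v^{(m)}(\gb_U)\,dv_{\gb_U}-\int_{\Om_\ep}v^{(m)}(g_+)\,dv_{g_+}=\int_{\{r=\ep\}}T_\ep,$$
where $T_\ep$ is a boundary transgression term built from $U$ and the curvature of $g_+$. I would justify this by integrating the first conformal variation formula for $v^{(2k)}$ of \cite{ISTY,G2} along the family $e^{2tU}g_+$, $t\in[0,1]$, on the fixed domain $\Om_\ep$ (on which every member of the family is smooth): the invariance of $\int v^{(m)}$ on closed manifolds forces the variation of $v^{(m)}\,dv$ to be an exact $g_+$-divergence, so the integrated identity localizes to $\{r=\ep\}$.

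Now use that $v^{(m)}(g_+)=\binom{m}{m/2}4^{-m/2}$ is constant, so the second integral equals $C_{n+1}^{-1}\,\Vol_{g_+}(\{r>\ep\})$, whose finite part as $\ep\to0$ is exactly $C_{n+1}^{-1}\,V_{g_+}$ by the volume expansion for $n$ odd. The left-hand integral converges to $\int_Xv^{(n+1)}(\gb_U)\,dv_{\gb_U}$ because $\gb_U$ is a smooth metric on the compact $\Xb$. Letting $\ep\to0$ and matching finite parts yields
$$\int_Xv^{(n+1)}(\gb_U)\,dv_{\gb_U}=C_{n+1}^{-1}V_{g_+}+\lim_{\ep\to0}R(\ep),$$
where $R(\ep)$ collects the $\ep$-divergent part of $C_{n+1}^{-1}\Vol_{g_+}(\{r>\ep\})$ together with $\int_{\{r=\ep\}}T_\ep$. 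Thus the proposition reduces to showing $\lim_{\ep\to0}R(\ep)=0$.

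The main obstacle is precisely this last point: that the transgression $\int_{\{r=\ep\}}T_\ep$ cancels the divergent terms of the volume expansion and leaves no residual finite contribution. Here I would exploit the special structure of $\gb_U$. Because the compactification is totally geodesic, $U=\log r+A+Br^n$ with $A,B$ even modulo $O(r^\infty)$ and $A|_{\pa X}=0$; equivalently $\om=U-\log r=O(r^2)$ relative to the geodesic compactification, and a short computation gives $R_{\gb_U}=e^{-2U}(m-1)(m-2)\big(1-|\nabla U|^2_{g_+}\big)$ with $1-|\nabla U|^2_{g_+}=O(r^n)$. The evenness of $g_r$ and of $A,B$ to order $n$ should force $T_\ep$ to contain only even (hence divergent) powers of $\ep$, so that its finite part vanishes in the limit. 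Carrying out this boundary-asymptotics bookkeeping explicitly, and thereby confirming that $R(\ep)\to0$ with the normalization $C_{n+1}^{-1}$, is the technical core of the proof.
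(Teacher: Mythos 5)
Your overall strategy --- transgressing the conformally invariant integral $\int v^{(m)}$, $m=n+1$, between $g_+$ and $\gb_U=e^{2U}g_+$ over the truncated domains $\Om_\ep=\{r>\ep\}$ and matching finite parts as $\ep\to0$ --- is genuinely different from the paper's, which instead applies the holographic formula for $Q$-curvature of \cite{GJ} to $\gb_U$, uses $Q(\gb_U)=0$, and evaluates the resulting boundary integrals via the lemmas of \cite{CQY} (in particular $\pa_r\De^{(n-3)/2}R=-2n\,n!\,B$ on $\pa X$ combined with \eqref{V}). Your preliminary computations are correct: $v^{(2k)}(g_+)=4^{-k}\binom{m}{k}$ for the Einstein metric $g_+$, the identity $4^{-m/2}\binom{m}{m/2}=C_{n+1}^{-1}$, and the formula for $R_{\gb_U}$ all check out, and an exact-divergence formula for $\pa_t\bigl(v^{(m)}(g_t)\,dv_{g_t}\bigr)$ along a conformal family is indeed available (Theorem 1.5 of \cite{G2}, quoted as \eqref{varvk} with $2k=m$, noting that $v^{(m)}$ and $L^{ij}_{(m/2)}$ are defined for general metrics in even dimension $m$).

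However, the argument has a genuine gap exactly where you locate it: the claim $\lim_{\ep\to0}R(\ep)=0$ is not a technical afterthought but the entire content of the proposition, and the mechanism you offer for it does not work as stated. The interpolating metrics $g_t=e^{2tU}g_+$ carry the factor $r^{2(t-1)}$ coming from $e^{2t\log r}$, so for $0<t<1$ the quantities $L^{ij}_{(m/2)}(g_t)$, the unit conormal, and the induced surface measure on $\{r=\ep\}$ expand in powers $\ep^{\alpha+\beta t}$ with exponents depending on $t$; a parity argument in $\ep$ therefore does not apply directly, and the subsequent integration over $t\in[0,1]$ (which produces expressions of the form $(\ep^{c}-1)/(c\log\ep)$) can in principle feed into the $\ep^0$ coefficient. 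Some cancellation must of course occur --- the divergent part of the transgression necessarily equals $-C_{n+1}^{-1}(c_0\ep^{-n}+\cdots+c_{n-1}\ep^{-1})$ since both sides of your identity are otherwise finite --- but proving that the finite part of the transgression vanishes requires precisely the boundary-asymptotics analysis that the paper packages into the identity $Q(\gb_U)=0$ and Lemmas 3.1--3.2 of \cite{CQY}. Until that computation is carried out (or replaced by an appeal to those results), your argument establishes only that $\int_Xv^{(n+1)}(\gb_U)\,dv_{\gb_U}-C_{n+1}^{-1}V_{g_+}$ equals an unevaluated finite limit, not that this limit is zero.
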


\noindent
Proposition~\ref{Ucomp} follows from an argument of \cite{CQY}.  The
formula of \cite{CQY} mentioned in the introduction for the renormalized
volume in terms of the 
the Euler characteristic and the integral of a pointwise
conformal invariant is derived by applying  
Alexakis' theorem \cite{Al} on the existence 
of a decomposition of $Q$-curvature.  Our proof of 
Proposition~\ref{Ucomp} uses an analogous identity expressing the
$Q$-curvature as a multiple of $v^{(n+1)}$ and a divergence.  The       
existence of such a formula can be deduced by general considerations since
the integrals of the $Q$-curvature and $v^{(n+1)}$ agree up to a
multiplicative constant on compact Riemannian manifolds.  However, an
explicit formula of this 
kind is known:  the holographic formula for $Q$-curvature of \cite{GJ}.  So
we base our proof on the holographic formula for $Q$-curvature.      

We first recall some properties of the 
metric $\gb_U$ which were established in \cite{CQY}. 

\begin{proposition}\label{0630}
Let $\gb_U$ be given by \eqref{gUdef}, where $U$ is the solution of
$-\Delta_{g_+}U=n$ with asymptotics \eqref{U} as above. Then we have   
\begin{itemize}
\item (\cite{CQY} Lemma 2.1)
\begin{equation}\label{P1}   
Q({\gb_U})=0.
\end{equation}

\item (\cite{CQY} Lemma 3.1)
Let $R$ denote the scalar curvature and $\Delta$ the Laplacian for the
metric $\gb_U$.  Then   
\begin{equation}\label{P2} 
\pa_r\Delta^{(n-3)/2}R=-2n n!B \,\,\, \text{on} \,\,  \pa X.
\end{equation}

\item (\cite{CQY} Lemma 3.2) Let $*$ stand for indices in the tangential
  directions on $\pa X$. For the covariant derivatives of the curvature
  tensor $R_{ijkl}$ of $\gb_U$, the following three types of components  
$$
\nabla_r^{2k+1} R_{****},
\quad \nabla_r^{2k} R_{r***}, \quad
\nabla_r^{2k-1} R_{r* r*},
$$ 
vanish at the boundary for $1 \leq 2k+1 \leq n-2$.
\end{itemize}
\end{proposition}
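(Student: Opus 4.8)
The three assertions share a common basis: the explicit near-boundary form $\gb_U=e^{2(A+Br^n)}(dr^2+g_r)$, in which $A$ and $g_r$ are even in $r$ to order $n$ (so the only reflection-asymmetric data appear at order $r^n$), together with the defining equation $-\Delta_{g_+}U=n$ and the conformal covariance of the relevant curvature quantities. I would establish \eqref{P1} first, then the parity statement, and finally \eqref{P2}, since the parity structure feeds directly into the computation of \eqref{P2}.

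For \eqref{P1} the plan is to apply the conformal transformation law for the critical $Q$-curvature in the even dimension $n+1$: writing $\gb_U=e^{2U}g_+$,
\begin{equation*}
e^{(n+1)U}Q_{\gb_U}=Q_{g_+}+P_{n+1}^{g_+}U,
\end{equation*}
where $P_{n+1}^{g_+}$ is the critical GJMS operator of $g_+$ (no zeroth-order term). Since $g_+$ is Einstein with $\Ric(g_+)=-ng_+$, the operator $P_{n+1}^{g_+}$ factors as a product of mutually commuting shifted Laplacians $\prod_j(\Delta_{g_+}-\mu_j)$, and for the critical order one shift vanishes, so $\Delta_{g_+}$ is itself a factor. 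Applying that factor first and using $\Delta_{g_+}U=-n$, the product turns $U$ into a constant; since $Q_{g_+}$ is also constant ($g_+$ Einstein), $Q_{g_+}+P_{n+1}^{g_+}U$ is a constant. But $\gb_U$ is smooth up to $\pa X$, so $Q_{\gb_U}$ is bounded there, whereas $e^{(n+1)U}=(re^{A+Br^n})^{n+1}\to0$ as $r\to0$; hence the constant must vanish and $Q_{\gb_U}\equiv0$. (In the hyperbolic model $U=\log r$, $\gb_U$ is flat, a useful consistency check.)

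For the parity statement I would use that, modulo the order-$r^n$ terms, $\gb_U$ is invariant under the reflection $\iota:r\mapsto-r$, so to this order $\iota$ is an isometry and $\na^m R$ is $\iota$-invariant. A component with $m$ differentiation indices and $p$ lower $r$-indices then transforms by $(-1)^{m+p}$, so it is an odd function of $r$ and vanishes at $\pa X$ whenever $m+p$ is odd. The three families $\na_r^{2k+1}R_{****}$, $\na_r^{2k}R_{r***}$, $\na_r^{2k-1}R_{r*r*}$ all have $m+p=2k+1$ odd, so they vanish for the reflection-symmetric part. It remains to control the asymmetric part: the order-$r^n$ data enter a component with $p$ lower $r$-indices at order $r^{n-p}$ (a tangential index costs no $\pa_r$, while each $r$-index among the curvature slots lowers the order by one through the Hessian of the conformal factor and the second fundamental form), so $\na_r^m$ of the asymmetric part contributes at $\pa X$ only when $m+p\ge n$. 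Since the stated range $1\le2k+1\le n-2$ forces $m+p=2k+1<n$, the asymmetric part does not contribute and the vanishing persists.

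Finally, for \eqref{P2} I would extract the coefficient of $B$ in $R_{\gb_U}$. Because $\gb_U$ is even to order $n-1$, the scalar curvature $R$ is even in $r$ through order $r^{n-3}$, and its first odd term is $cBr^{n-2}+O(r^n)$; the conformal change formula $R_{\gb_U}=e^{-2\phi}\bigl(R_{\mathrm{sym}}-2n\Delta\phi-n(n-1)|\na\phi|^2\bigr)$ with $\phi$ carrying the odd part $Br^n$ gives the leading odd contribution $-2n\,\pa_r^2(Br^n)=-2n\,n(n-1)Br^{n-2}$, i.e.\ $c=-2n^2(n-1)$. Since $\Delta_{\gb_U}=\pa_r^2+H\pa_r+\Delta_{\mathrm{tang}}$ with $H=O(r)$, the operator $\Delta_{\gb_U}$ preserves $r$-parity, so $\Delta_{\gb_U}^{(n-3)/2}R$ still has its $r^1$-coefficient coming only from the odd part; moreover, among the terms of $\Delta_{\gb_U}$ only $\pa_r^2$ lowers the $r$-degree (by $2$), while $H\pa_r$ and $\Delta_{\mathrm{tang}}$ preserve it, so reaching $r^1$ from $r^{n-2}$ in $(n-3)/2$ steps forces the pure $\pa_r^2$ chain at every step. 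That chain yields the factor $(n-2)(n-3)\cdots2=(n-2)!$, whence $\pa_r\Delta^{(n-3)/2}R|_{\pa X}=(n-2)!\,c\,B=-2n\,n!\,B$, using $n(n-1)(n-2)!=n!$. The main obstacle is this last, purely calculational step: pinning down the coefficient $c$ exactly — in particular checking that the order-$r^n$ part of $g_r$ contributes nothing beyond the conformal term to the $r^{n-2}$ coefficient, and that no tangential derivatives of $B$ survive — which is where the parity structure of the previous step and the precise form of $\gb_U$ must be used with care.
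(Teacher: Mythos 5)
The paper offers no proof of this proposition: all three items are quoted verbatim from \cite{CQY} (Lemmas 2.1, 3.1, 3.2), so there is no in-paper argument to compare against. Your reconstruction is essentially the argument of \cite{CQY} itself, and it is correct in outline. For \eqref{P1}, the transformation law $e^{(n+1)U}Q_{\gb_U}=Q_{g_+}+P_{n+1}^{g_+}U$ together with Gover's factorization of the critical GJMS operator on the Einstein metric $g_+$ (with $\Ric(g_+)=-ng_+$ the factor indexed by $j=(n+1)/2$ is exactly $\Delta_{g_+}$, and the factors commute) does make the right-hand side constant, and the decay $e^{(n+1)U}=O(r^{n+1})$ against the boundedness of $Q_{\gb_U}$ (which holds because $\gb_U$ is smooth up to $\pa X$ for $n$ odd) forces that constant to vanish. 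The reflection-parity argument for the third item, with the order count $O(r^{n-p})$ for the asymmetric part of a curvature component with $p$ lower $r$-indices (via Gauss--Codazzi and the Hessian of $Br^n$), correctly shows that nothing survives at $\pa X$ in the range $m+p=2k+1\le n-2<n$.

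The one place you stop short is the point you yourself flag in the proof of \eqref{P2}: whether the $r^n$ coefficient $g^{(n)}$ of $g_r$ contaminates the $r^{n-2}$ coefficient of $R_{\gb_U}$. It does not, but this requires an input you have not invoked: for $n$ odd the Einstein equation forces $\tr_g g^{(n)}=0$ (and $\operatorname{div}g^{(n)}=0$), so the only term of $R_{dr^2+g_r}$ that could produce an asymmetric $r^{n-2}$ contribution, namely $-\tr_{g}\ddot g_r\ni -n(n-1)\tr_g g^{(n)}\,r^{n-2}$, vanishes; every other term is a product of an even factor that is $O(r)$ (e.g.\ the second fundamental form) with an asymmetric factor that is $O(r^{n-1})$, hence $O(r^n)$. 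With that supplied, your coefficient $c=-2n^2(n-1)$ and the pure-$\pa_r^2$ chain giving the factor $(n-2)!$ yield $-2n^2(n-1)(n-2)!=-2n\,n!$ as required; you should also note explicitly that the asymmetric part of $\Delta_{\gb_U}$ as an operator is $O(r^{n-1})\pa_r+O(r^n)(\cdots)$, so it cannot feed the even part of $R$ into the $r^1$ coefficient. With these two remarks the proposal is a complete proof along the same lines as the cited source.
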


\vskip .2in
\noindent
{\it Proof of Proposition~\ref{Ucomp}}.  
The holographic formula for $Q$-curvature states that for any metric in 
even dimension $m=n+1$, one has 
\begin{equation}\label{Qform}
2c_{m/2}Q=v^{(n+1)} 
+\frac{1}{n+1}\sum_{k=1}^{(n-1)/2}(n+1-2k)p_{2k}^*v^{(n+1-2k)},
\end{equation}
where $c_l^{-1}=(-1)^l2^{2l}l!(l-1)!$.  Here the $v^{(n+1-2k)}$ are the
renormalized volume coefficients, $p_{2k}$ is a natural
differential operator of order $2k$ with no constant term and with
principal part $a_{n+1,k}\Delta^k$, where
$$
a_{n+1,k}=\frac{\Gamma\left((n+1-2k)/2\right)}
{2^{2k}\,k!\;\Gamma\left((n+1)/2\right)},
$$
and $p_{2k}^*$ denotes the formal adjoint of $p_{2k}$.  In particular, each
term $p_{2k}^*v^{(n+1-2k)}$ with $k\geq 1$ in the sum on the right-hand
side of \eqref{Qform} is the divergence of a natural 1-form.  

Apply \eqref{Qform} to $\gb_U$ and use \eqref{P1} to deduce that  
\[
\begin{split}
v^{(n+1)}(\gb_U) = &
-\frac{1}{n+1}\sum_{k=1}^{(n-1)/2}(n+1-2k)p_{2k}^*v^{(n+1-2k)}\\
= & -\frac{2}{n+1}p_{n-1}^*v^{(2)}
-\frac{1}{n+1}\sum_{k=1}^{(n-3)/2}(n+1-2k)p_{2k}^*v^{(n+1-2k)}\\
= & -\frac{2}{n+1}a_{n+1,(n-1)/2}\Delta^{(n-1)/2}v^{(2)}\\ 
& \qquad +q\, v^{(2)}
-\frac{1}{n+1}\sum_{k=1}^{(n-3)/2}(n+1-2k)p_{2k}^*v^{(n+1-2k)}, 
\end{split}
\]
where $q$ is a natural differential operator of order less than 
$n-1$ which is a divergence, and all the terms on the right-hand side refer
to the metric $\gb_U$.  Now integrate over $X$.  The right-hand side is a
divergence so can be rewritten as a boundary integral.  Recalling that 
$v^{(2)}=-\frac12 P^k{}_k = -\frac{1}{4n} R$, one has
$$
\int_X\Delta^{(n-1)/2}v^{(2)}\,dv_{\gb_U}=
\frac{1}{4n}\int_{\pa X}\pa_r\Delta^{(n-3)/2}R\; dv_g.
$$
But \eqref{P2} asserts that 
$\pa_r\Delta^{(n-3)/2}R=-2n n!B$ on $\pa X$.  Substituting and using 
\eqref{V} gives
$$
\int_X\Delta^{(n-1)/2}v^{(2)}\,dv_{\gb_U}=-\frac{n!}{2}\, V_{g_+}.  
$$
All terms in the expression
$$
q\, v^{(2)}
-\frac{1}{n+1}\sum_{k=1}^{(n-3)/2}(n+1-2k)p_{2k}^*v^{(n+1-2k)}
$$
involve fewer derivatives of $\gb_U$.  Arguing as in \cite{CQY}, the third
part of Proposition~\ref{0630} implies that   
the resulting integral over the boundary vanishes.  Thus
$$
\int_X v^{(n+1)}(\gb_U)\,dv_{\gb_U} 
= -\frac{2}{n+1}a_{n+1,(n-1)/2}\left(-\frac{n!}{2}\right) V_{g_+}.  
$$
Collecting the constant gives the result.  
\stopthm

\medskip
\noindent
{\it Proof of Theorem~\ref{geodcomp}.}
Let $\gb$ be a totally geodesic compactification of $g_+$ with induced
boundary metric $g$.  
Let $\gb_U$ be the compactification as above with the same boundary
metric.  Then $\gb=e^{2\om} \gb_U$ where $\om = O(r^2)$.  We will show that  
\begin{equation}\label{equal}
\int_Xv^{(n+1)}(\gb)\,dv_{\gb}=\int_Xv^{(n+1)}(\gb_U)\,dv_{\gb_U}.  
\end{equation}
The result then follows by Proposition~\ref{Ucomp}.  

Set $\gb_t=e^{2t\om}\gb_U$.  
Theorem 1.5 of \cite{G2} gives a divergence formula of the form   
$$
\pa_t \left(v^{(n+1)}(\gb_t)\,dv_{\gb_t}\right)
=(-2)^{-(n+1)/2}\nabla_i
\left(L^{ij}_{((n+1)/2)}(\gb_t)\nabla_j\,\om\right)\,dv_{\gb_t}  
$$
for a particular natural symmetric 2-tensor $L^{ij}_{((n+1)/2)}$.    
The covariant derivatives refer to the Levi-Civita connection of 
$\gb_t$.  Integrating by parts and using $\nabla \om|_{\pa X} =0$ gives
$$
\pa_t \int_Xv^{(n+1)}(\gb_t)\,dv_{\gb_t}=0.
$$
Thus $\int_Xv^{(n+1)}(\gb_t)\,dv_{\gb_t}$ is independent of $t$, which 
gives \eqref{equal}.

We have thus completed the proof of Theorem~\ref{geodcomp}.
\stopthm

\section{Second Variation}\label{secondvar}

If $M$ is a connected compact manifold, consider the functional    
\begin{equation}\label{def1}
\cF_k(g)=(-2)^{k}\int_Mv^{(2k)}(g)\,dv_g
\end{equation}
on the space of metrics on $M$, where $v^{(2k)}(g)$ is the renormalized  
volume coefficient defined in \S\ref{renormvol}. For notational
convenience, we set 
\begin{equation}\label{notation2}
v_k(g)= (-2)^k v^{(2k)}(g).  
\end{equation}
This is the same notation as in \cite{G2}. The coefficient is chosen 
so that  if $g$ is locally conformally flat, then 
$$
v_k(g)=\sigma_k\left(g^{-1}P\right),\qquad 0\leq k\leq n
$$
(see Proposition 1 of \cite{GJ}).  
It will also be convenient to introduce $\rho =-\frac12 r^2$ and 
$g(\rho)=g_r$, where $g_r$ is the 1-parameter family of metrics appearing
in \eqref{normalform}.  Then the expansion \eqref{notation1} defining the
$v^{(2k)}$ becomes 
\begin{equation}\label{vkdef}
\left(\frac{\det g(\rho)}{\det g}\right)^{1/2}
\sim  \sum_{k\geq 0} v_k(g)\rho^k. 
\end{equation}
Set $v(\rho)=\left(\det g(\rho)/\det g\right)^{1/2}$.  

Recall that for even $n$, the $v^{(2k)}(g)$ are only defined for $k\leq
n/2$ for general metrics.  But they are invariantly defined
for all $k$ if $n\geq 4$ and 
$g$ is locally conformally flat or conformally Einstein.  This is because
in these cases there is an invariant determination of $g(\rho)$
to all orders; see \cite{FG2}.  If $g$ is Einstein with 
$R_{ij}=2a (n-1)g_{ij}$, one has $g(\rho)=(1+a \rho)^2g$.  Observe that this
gives $v(\rho)=(1+a \rho)^n$, so 
\begin{equation}\label{vkEinstein}
v_k(g)=a^k\begin{pmatrix}n\\k\end{pmatrix},\qquad 0\leq k\leq n.
\end{equation}
For a conformal rescaling $\gh=e^{2\om}g$ of an Einstein metric $g$,
$\gh_r$ is defined by putting the Poincar\'e metric
$r^{-2}\left(dr^2+(1-a r^2/2)^2g\right)$ for $g$ into normal form
relative to $\gh$ by a diffeomorphism.  Then one sets $\gh(\rho)=\gh_r$
with $\rho = -\frac12 r^2$ 
and defines $v_k(\gh)$ via \eqref{vkdef}.  This is
well-defined, but a direct formula in terms of $\gh$ is not available.  

The crucial ingredient in the variational analysis is the following formula
for the conformal variation of the $v_k(g)$.  
For a Riemannian manifold $(M,g)$  and $\om\in C^\infty(M)$, set 
$g_t=e^{2t\om}g$ and 
define $\delta v_k(g,\om)=\pa_t|_{t=0}v_k(g_t)$.  Then (2.4), (3.8) of
\cite{ISTY} (see also Theorem 1.5 of \cite{G2}) show that    
\begin{equation}\label{varvk}
\delta
v_k(g,\om)=\nabla_i\left(L^{ij}_{(k)}(g)\nabla_j\om\right)-2kv_k(g)\om, 
\end{equation}
where
\begin{equation}\label{Ldef}
L^{ij}_{(k)}(g)= 
-\frac{1}{k!}\pa_\rho^k\left( v(\rho)\int_0^\rho g^{ij}(u)\,du
\right)\Big{|}_{\rho =0}
=-\sum_{l=1}^k\frac{1}{l!}v_{k-l}(g)\,
\pa_\rho^{l-1}g^{ij}(\rho)|_{\rho =0}.     
\end{equation}
Here $g^{ij}(u)=\left(g_{ij}(u)\right)^{-1}$ and $\nabla$ denotes 
the covariant derivative with respect to $g$.     

We first review the identification of the critical points of $\cF_k$ from
\cite{CF,G2}.   
By \eqref{notation2}, \eqref{def1} can be re-written as 
$$
\cF_k(g)=\int_Mv_{k}(g)\,dv_g.
$$  
By \eqref{varvk} and the fact that $\delta dv_g = n\om dv_g$, one deduces
that the conformal variation of $\cF_k$ is given by 
\begin{equation}\label{varFk}
\delta\cF_k=(n-2k)\int_Mv_k\om\,dv_g.
\end{equation}
For $n$ even and $2k=n$, this recovers the fact that $\cF_{n/2}$ is
conformally invariant.  For $2k\neq n$,  
we are interested in the restriction $\cF_k|_{\cC_1}$, where $\cC_1$
denotes the space of unit volume metrics in a conformal class $\cC$ of
metrics on $M$.  We use the  Lagrange   
multiplier method.  The critical points are the
metrics $g\in \cC_1$ which satisfy for some constant $\lambda$ that 
$$
\delta\left(\cF_k -\lambda\Vol(M)\right)(g,\om)=0\quad\text{for
  all }\om.  
$$
By \eqref{varFk}, this is
$$
(n-2k)\int_Mv_k\om\,dv_g-n\lambda \int_M\om\,dv_g=0 \quad\text{for
  all }\om,
$$
which gives $v_k(g)=n\lambda/(n-2k)$.  Thus the critical points are 
precisely the unit volume metrics for which $v_k(g)$ is constant.    

The following theorem identifies the second variation of 
$\cF_k|_{\cC_1}$ at a critical point.  Suppose $g$ is a unit volume metric
for which $v_k(g)$ is constant.  The
tangent space of $\cC_1$ at $g$ is given by
$$
T_g\cC_1=\left\{2\om g:\int_M \om\,dv_g=0\right\}.  
$$
For such an $\om$, set 
$$
\left(\cF_k|_{\cC_1}\right)''(\om)
=\pa_t^2|_{t=0}\cF_k(\gamma_t), 
$$
where $\gamma_t$ is a curve in $\cC_1$ satisfying $\gamma_0=g$ and
$\gamma_0'=2\om g$.    
\begin{theorem}\label{hess}
Let $n\geq 3$, $k\geq 1$ and $k\leq n/2$ if $n$ is even.  Let $(M,g)$ be a
connected compact  
Riemannian manifold and suppose $g$ satisfies $v_k(g)=c$ for some constant
$c$.  Let $\om\in C^\infty(M)$ satisfy $\int_M \om\,dv_g=0$. Then 
$$
\left(\cF_k|_{\cC_1}\right)''(\om)=
-(n-2k)\int_M\left[L^{ij}_{(k)}(g)\om_i\om_j+2kv_k(g)\om^2\right]\,dv_g. 
$$
\end{theorem}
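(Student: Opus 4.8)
The plan is to obtain the Hessian by differentiating the first variation formula \eqref{varFk} a second time along the given curve. Since $\ga_t$ lies in the conformal class $\cC$, I would write $\ga_t=e^{2\p_t}g$ for a smooth family $\p_t\in C^\infty(M)$ with $\p_0=0$; the initial condition $\ga_0'=2\om g$ then forces $\pa_t\p_t|_{t=0}=\om$. The velocity of the curve at parameter $t$ is the conformal direction $\pa_t\p_t$, so applying \eqref{varFk} at the metric $\ga_t$ yields
$$
\pa_t\cF_k(\ga_t)=(n-2k)\int_M v_k(\ga_t)\,(\pa_t\p_t)\,dv_{\ga_t}.
$$
Differentiating once more and evaluating at $t=0$ produces, by the product rule, three contributions: one from $\pa_t v_k(\ga_t)|_{t=0}$, one from the acceleration $\pa_t^2\p_t|_{t=0}$, and one from $\pa_t\,dv_{\ga_t}|_{t=0}$.

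The first and third of these are routine. For the $\pa_t v_k$ term I would use that $\pa_t v_k(\ga_t)|_{t=0}=\delta v_k(g,\om)$ (the $1$-jet of $\ga_t$ at $t=0$ agrees with that of $e^{2t\om}g$), and substitute \eqref{varvk}; multiplying by $\om$, integrating over the closed manifold $M$, and integrating the divergence term by parts gives
$$
-\int_M L^{ij}_{(k)}(g)\,\om_i\om_j\,dv_g-2k\int_M v_k(g)\,\om^2\,dv_g.
$$
The $\pa_t\,dv_{\ga_t}$ term uses $\pa_t\,dv_{\ga_t}|_{t=0}=n\om\,dv_g$ and contributes $n\int_M v_k(g)\,\om^2\,dv_g$.

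The crux is the acceleration term $\int_M v_k(g)\,(\pa_t^2\p_t|_{t=0})\,dv_g$, and this is exactly where the critical point hypothesis $v_k(g)=c$ is used. Because $v_k(g)$ is constant, this term collapses to $c\int_M \pa_t^2\p_t|_{t=0}\,dv_g$, so only the integral of the acceleration survives, and I would read that integral off from the volume constraint: differentiating $\Vol(\ga_t)=\int_M e^{n\p_t}\,dv_g\equiv 1$ twice at $t=0$ gives $n\int_M\pa_t^2\p_t|_{t=0}\,dv_g+n^2\int_M\om^2\,dv_g=0$, whence $\int_M\pa_t^2\p_t|_{t=0}\,dv_g=-n\int_M\om^2\,dv_g$. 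The acceleration term is therefore $-n\int_M v_k(g)\,\om^2\,dv_g$, which cancels the $\pa_t\,dv_{\ga_t}$ contribution exactly, and collecting what remains gives the asserted formula. The point to be careful about is precisely this: the constancy of $v_k(g)$ at a critical point is what makes the acceleration term depend on the curve only through $\int_M\pa_t^2\p_t\,dv_g$, a quantity fixed by the constraint; this both produces the cancellation and shows that $\left(\cF_k|_{\cC_1}\right)''(\om)$ is well defined independently of the curve chosen.
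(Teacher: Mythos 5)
Your argument is correct, and the essential analytic content --- substituting the first variation formula \eqref{varvk} for $\delta v_k(g,\om)$ and integrating the divergence term by parts --- is exactly the paper's. Where you differ is in how the constraint is handled. The paper never parametrizes a curve inside $\cC_1$: it introduces the Lagrange multiplier $\lambda$ with $c=n\lambda/(n-2k)$, notes that $\delta(\cF_k-\lambda\Vol)=0$ identically at $g$, and therefore computes the Hessian of the restricted functional as $\pa_t^2|_{t=0}(\cF_k-\lambda\Vol)(e^{2t\om}g)$ along the \emph{unconstrained} exponential curve; the cancellation you obtain between the acceleration term and the $\pa_t\,dv$ term appears there as the cancellation $n^2\lambda\int\om^2-\lambda n^2\int\om^2=0$. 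Your version instead differentiates twice along a genuine curve $\gamma_t=e^{2\varphi_t}g$ in $\cC_1$ and uses the twice-differentiated volume constraint to evaluate $\int_M\ddot\varphi_0\,dv_g$. The two bookkeeping schemes are equivalent, and each buys something: the paper's is shorter and sidesteps the acceleration entirely, while yours makes explicit why the constancy of $v_k(g)$ is what renders the answer independent of the choice of curve with the given initial velocity --- a point the Lagrange-multiplier formulation leaves implicit. One presentational nit: in your displayed intermediate contributions you drop the overall factor $(n-2k)$ that is present in $\pa_t\cF_k(\gamma_t)$; it must multiply all three contributions for the cancellation and the final formula to come out as stated.
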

\begin{proof}
We can assume that $k\neq n/2$.  
Define $\lambda$ by $c=n\lambda/(n-2k)$, so that 
$\delta\left(\cF_k -\lambda\Vol(M)\right)=0$.  
Since the Hessian at a critical point is invariantly
defined on the tangent space, we have
$$
\left(\cF_k|_{\cC_1}\right)''(\om)=
\pa_t^2|_{t=0}\left(\cF_k-\lambda\Vol(M)\right)(g_t),
$$
with $g_t=e^{2t\om}g$ as above.  Now \eqref{varFk} gives 
$$
\pa_t\cF_k(g_t)=(n-2k)\int_Mv_k(g_t)\om\,dv_{g_t}.
$$
Combining this with
$$
\pa_t\Vol_{g_t}(M)=n\int_M\om\,dv_{g_t}
$$
shows that
\[
\begin{split}
\pa_t^2|_{t=0}&\left(\cF_k-\lambda\Vol(M)\right)(g_t) 
=\pa_t|_{t=0}\left[(n-2k)\int_Mv_k(g_t)\om\,dv_{g_t}
-\lambda n\int_M\om\,dv_{g_t}\right] \\
=&(n-2k)\int_M\left[\delta v_k(g,\om)+nv_k(g)\om\right]\om\,dv_g
-\lambda n^2\int_M\om^2\,dv_g\\
=&(n-2k)\int_M\delta
v_k(g,\om)\om\,dv_g+n^2\lambda\int_M\om^2\,dv_g
-\lambda n^2\int_M\om^2\,dv_g\\  
=&(n-2k)\int_M\delta v_k(g,\om)\om\,dv_g\\
=&-(n-2k)\int_M\left[L^{ij}_{(k)}(g)\om_i\om_j+2kv_k(g)\om^2\right]\,dv_g, 
\end{split}
\]
where for the last equality we use \eqref{varvk} and integration by parts. 
\end{proof}

We remark that Theorem~\ref{hess} and its proof remain valid for all $k\geq
1$ when $n\geq 4$ is even if $g$ is Einstein or locally conformally flat.
This is because the main ingredient, \eqref{varvk}, just uses that the  
Poincar\'e metrics arising from conformally related metrics on the
boundary are related by a diffeomorphism.    

\bigskip
\noindent
{\it Proof of Theorem~\ref{signs}.}
Let $g$ be Einstein with $R_{ij}=2a (n-1)g_{ij}$.  We use
Theorem~\ref{hess} to evaluate $\left(\cF_k|_{\cC_1}\right)''$.  Recall
that $g_{ij}(\rho)=(1+a \rho)^2g_{ij}$ and $v(\rho)=(1+a \rho)^n$.  So 
$g^{ij}(\rho)=(1+a\rho)^{-2}g^{ij}$.  Hence
$$
v(\rho)\int_0^\rho g^{ij}(u)\,du=\rho(1+a\rho)^{n-1}g^{ij}.
$$
Therefore \eqref{Ldef} gives 
\begin{equation}\label{Leinstein}
L^{ij}_{(k)}(g)=-a^{k-1}\begin{pmatrix}n-1\\k-1\end{pmatrix} g^{ij},\qquad
  1\leq k\leq n.
\end{equation}
Recalling \eqref{vkEinstein}, Theorem~\ref{hess} gives 
\[
\begin{split}
\left(\cF_k|_{\cC_1}\right)''(\om)
=&(n-2k)a^{k-1}\begin{pmatrix}n-1\\k-1\end{pmatrix}
\int_M\left(|\nabla\om|_g^2-2na\om^2\right)\,dv_g\\
=&(n-2k)a^{k-1}\begin{pmatrix}n-1\\k-1\end{pmatrix}
\int_M\left(|\nabla\om|_g^2-R\om^2/(n-1)\right)\,dv_g.
\end{split}
\]

If $R<0$, this has the same sign as the leading
coefficient $(n-2k)a^{k-1}$, which gives the desired conclusion.

If $R>0$, we use Obata's estimate~\cite{O} for the first eigenvalue of
$-\Delta$ for 
an Einstein metric:  $\lambda_1(-\Delta)\geq R/(n-1)$ with equality only
for $S^n$.  This leads to the desired result.  For $S^n$, the equality
holds if and only if $\om$ is  
an eigenfunction corresponding to $\lambda_1$.  This is the
$(n+1)$-dimensional space of infinitesimal conformal factors corresponding to
conformal diffeomorphisms.
\stopthm

It is possible to formulate a result also for non-Einstein critical 
points.  It is clear from Theorem~\ref{hess} that if $L^{ij}_{(k)}(g)$ is 
definite and $v_k(g)$ is a constant of the same sign, then  
$\left(\cF_k|_{\cC_1}\right)''$ is definite.  This generalizes the result
of Viaclovsky that negative $k$-admissible critical points are local
extrema when $k=1$ or $2$ or $g$ is locally conformally flat.  

Consider finally the second variation of the renormalized volume
when $n$ is even.  Let $(X,g_+)$ be AHE and let $\cC=(M,[g])$ be one of  
the connected components of its conformal infinity.  Fix a
representative of the conformal 
infinity on each of the other connected components and view $V_{g_+}(g)$ as
a function on $\cC$.
As discussed in the introduction, its conformal variation is  
$$
\delta V_{g_+} = \int_Mv^{(n)}(g)\om\,dv_g.
$$
Upon introducing a Lagrange multiplier exactly as above for $\cF_k$, one  
deduces that the critical points of $V_{g_+}|_{\cC_1}$ are the unit volume
metrics for which $v^{(n)}(g)$ is constant. For such a $g$ and for $\om$ 
satisfying $\int_M\om\,dv_g=0$, we define the second variation by
$$
\left(V_{g_+}|_{\cC_1}\right)''(\om)
=\pa_t^2|_{t=0}V_{g_+}(\gamma_t), 
$$
where $\gamma_t$ is a curve in $\cC_1$ satisfying $\gamma_0=g$ and
$\gamma_0'=2\om g$.    
\begin{theorem}\label{V2}
Let $n\geq 2$ be even.  Let $(X,g_+)$ be AHE and let 
$(M,[g])$ be one of the connected components of its conformal  
infinity.  Suppose that $g$ satisfies that $v^{(n)}(g)=c$ for some constant
$c$ and let $\int_M \om\,dv_g=0$. Then 
$$
\left(V_{g_+}|_{\cC_1}\right)''(\om)
=(-1)^{n/2+1}2^{-n/2}\int_M
\left[L^{ij}_{(n/2)}(g)\om_i\om_j+nv_{n/2}(g)\om^2\right]\,dv_g. 
$$
\end{theorem}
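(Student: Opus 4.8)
The plan is to repeat the argument proving Theorem~\ref{hess} almost verbatim, the one structural difference being that the first variation of $V_{g_+}$ carries no factor $(n-2k)$; indeed it is precisely this absence that makes $V_{g_+}$ a nontrivial substitute for the conformally invariant $\cF_{n/2}$. I would start from the first variation formula $\delta V_{g_+}=\int_M v^{(n)}(g)\om\,dv_g$ recorded just above, which holds at every metric of $\cC$, and rewrite it via \eqref{notation2} as $\delta V_{g_+}=(-2)^{-n/2}\int_M v_{n/2}(g)\om\,dv_g$. The critical point hypothesis $v^{(n)}(g)=c$ says exactly that $v_{n/2}(g)=(-2)^{n/2}c$ is constant; introducing a Lagrange multiplier $\la$ for the unit-volume constraint then forces $\la=c/n$, so that $\delta(V_{g_+}-\la\Vol(M))=0$. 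Because the Hessian at a critical point is invariantly defined on $T_g\cC_1$, I would evaluate it as $\pa_t^2|_{t=0}(V_{g_+}-\la\Vol(M))(g_t)$ along the conformal curve $g_t=e^{2t\om}g$.

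Differentiating once more, and using $\pa_t V_{g_+}(g_t)=(-2)^{-n/2}\int_M v_{n/2}(g_t)\om\,dv_{g_t}$, $\pa_t\Vol_{g_t}(M)=n\int_M\om\,dv_{g_t}$ together with $\delta dv_g=n\om\,dv_g$, the product rule gives
\[
\pa_t^2|_{t=0}(V_{g_+}-\la\Vol(M))(g_t)
=(-2)^{-n/2}\int_M\left[\delta v_{n/2}(g,\om)+nv_{n/2}(g)\om\right]\om\,dv_g-\la n^2\int_M\om^2\,dv_g.
\]
The crucial cancellation is that, since $(-2)^{-n/2}n\,v_{n/2}(g)=nc=\la n^2$, the two $\om^2$ terms cancel identically, leaving only $(-2)^{-n/2}\int_M\delta v_{n/2}(g,\om)\,\om\,dv_g$.

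Finally I would insert \eqref{varvk} with $k=n/2$, that is $\delta v_{n/2}(g,\om)=\nabla_i(L^{ij}_{(n/2)}(g)\nabla_j\om)-n\,v_{n/2}(g)\om$, and integrate the divergence term by parts against $\om$ on the closed manifold $M$. This produces
\[
\left(V_{g_+}|_{\cC_1}\right)''(\om)=-(-2)^{-n/2}\int_M\left[L^{ij}_{(n/2)}(g)\om_i\om_j+n\,v_{n/2}(g)\om^2\right]dv_g,
\]
and since $-(-2)^{-n/2}=(-1)^{n/2+1}2^{-n/2}$ this is exactly the asserted formula. There is no genuine obstacle here: the whole content is already packaged in the first variation formula for $V_{g_+}$ and in \eqref{varvk}. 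The only points requiring care are the bookkeeping of the constant $(-2)^{-n/2}$ relating $v^{(n)}$ to $v_{n/2}$, and the verification that the Lagrange-multiplier $\om^2$ contributions cancel---this cancellation is what plays, for $2k=n$, the role that the nonvanishing factor $(n-2k)$ played in Theorem~\ref{hess}.
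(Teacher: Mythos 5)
Your proposal is correct and follows essentially the same route as the paper: introduce the Lagrange multiplier $\la=c/n$, differentiate twice along $g_t=e^{2t\om}g$, observe the cancellation of the $\om^2$ terms, and then substitute \eqref{varvk} with $k=n/2$ and integrate by parts. The only (immaterial) difference is that you convert $v^{(n)}$ to $(-2)^{-n/2}v_{n/2}$ at the outset, while the paper does so at the end; the bookkeeping of the constant $(-1)^{n/2+1}2^{-n/2}$ checks out.
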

\begin{proof}
We argue exactly as in the proof of Theorem~\ref{hess}.  Define $\lambda$
by $c=n\lambda$ so that 
$\delta\left(V_{g_+} -\lambda\Vol(M)\right)=0$.  Then
$$
\left(V_{g_+}|_{\cC_1}\right)''(\om)=
\pa_t^2|_{t=0}\left(V_{g_+}-\lambda\Vol(M)\right)(g_t)
$$
with $g_t=e^{2t\om}g$.  And
\[
\begin{split}
\pa_t^2|_{t=0}&\left(V_{g_+}-\lambda\Vol(M)\right)(g_t)
=\pa_t|_{t=0}\left[\int_Mv^{(n)}(g_t)\om\,dv_{g_t}
-\lambda n\int_M\om\,dv_{g_t}\right] \\
=&\int_M\left[\delta v^{(n)}(g,\om)+nv^{(n)}(g)\om\right]\om\,dv_g
-\lambda n^2\int_M\om^2\,dv_g\\
=&(-2)^{-n/2}\int_M\delta v_{n/2}(g,\om)\om\,dv_g\\  
=&(-1)^{n/2+1}2^{-n/2}\int_M\left[L^{ij}_{(n/2)}(g)\om_i\om_j+nv_{n/2}(g)\om^2\right]\,dv_g. 
\end{split}
\]
\end{proof} 

\noindent
{\it Proof of Theorem~\ref{VE}.}  This follows exactly as in the proof of
Theorem~\ref{signs} above.  If $n\geq 4$ and $g$ is Einstein with
$R_{ij}=2a(n-1)g_{ij}$, or if $n=2$ and $g$ has constant scalar curvature
$R=4a$, then  $L^{ij}_{(n/2)}(g)$ is given by \eqref{Leinstein} and
$v_{n/2}(g)$ by \eqref{vkEinstein}.  Substituting into Theorem~\ref{V2}
gives
$$
\left(V_{g_+}|_{\cC_1}\right)''(\om)=
-(-a)^{n/2-1}2^{-n/2}\begin{pmatrix}n-1\\n/2-1\end{pmatrix}
\int_M\left(|\nabla\om|_g^2-R\om^2/(n-1)\right)\,dv_g.
$$
The conclusion is now clear if $R<0$.  If $R>0$, it follows from the same
argument as in the proof of Theorem~\ref{signs} using Obata's estimate on  
$\lambda_1(-\Delta)$.  
\stopthm

The sign of the second variation can also be deduced from Theorem~\ref{V2}
for certain non-Einstein critical points of $V_{g_+}|_{\cC_1}$.  It is
clear that $\left(V_{g_+}|_{\cC_1}\right)''$ is   
definite if $L^{ij}_{(n/2)}(g)$ is definite and $v_{n/2}(g)$ is a constant
of the same sign.  For instance, one concludes that 
$\left(V_{g_+}|_{\cC_1}\right)''$ is negative definite if $g$ is a negative 
$n/2$-admissible solution of $\sigma_{n/2}(g^{-1}P)=c$ and $n=4$ or
$n\geq 6$ with $g$ locally conformally flat.  Under these conditions,  
$v_{n/2}(g)=\sigma_{n/2}(g^{-1}P)$ and 
$L^{ij}_{(n/2)}(g)=-T^{ij}_{(n/2-1)}(g^{-1}P)$ is the negative of the
corresponding Newton tensor (see \cite{G2}).

\end{document}